\newtheorem{theorem}{Theorem}[section]
\newtheorem{lemma}[theorem]{Lemma}
\newtheorem{proposition}[theorem]{Proposition}
\newenvironment{proof}{{\bf Proof.}}{\hfill$\Box$\\}
\newenvironment{definition}{{\vskip 3ex\bf Definition.} }{\\}
\newenvironment{remark}{{\vskip 3ex\bf Remark.} }{\\}
\newcommand{\R}{\mathbb{R}}
\newcommand{\ad}{\mathrm{ad}}
\newcommand{\rk}{\mathrm{rk}}
\newcommand{\spn}{\mathrm{span}}
\newcommand{\XX}{\mathcal{X}}
\newcommand{\HH}{\mathcal{H}}
\newcommand{\CC}{\mathcal{C}}
\newcommand{\DD}{\mathcal{D}}
\newcommand{\EE}{\mathcal{E}}
\newcommand{\BG}{{\bf G}}
\newcommand{\BF}{{\bf F}}
\newcommand{\BV}{{\bf V}}
\newcommand{\BW}{{\bf W}}
\newcommand{\BX}{{\bf X}}
\newcommand{\BY}{{\bf Y}}
\newcommand{\BH}{{\bf H}}
\newcommand{\vp}{\varphi}
\title{\bf Canonical frames for $Gl(2)$-structures}
\author{Wojciech Kry\'nski\thanks{
Institute of Mathematics, Polish Academy of Sciences, ul.~\'Sniadeckich 8, 00-956 Warszawa, Poland, E-mail: krynski@impan.gov.pl.\newline
Supported by Junior Research Fellowship from Erwin Schr\"odinger Institute, Vienna.
}}
\begin{document}
\maketitle

\begin{abstract}
We construct a canonical frame for an arbitrary $Gl(2)$-structure thus solving the equivalence problem for $Gl(2)$-structures. Our treatment includes also a problem of contact equivalence of ordinary differential equations and applies to certain classes of vector distributions. Additionally we characterise $Gl(2)$-structures which are defined by ODEs.
\end{abstract}

{\bf Keywords:} $Gl(2)$-structure, frame bundle, problem of equivalence, ordinary differential equations, distribution.

{\bf MSC:} 53A55, 34A26

\section{Introduction}\label{sec1}
In the present paper we solve the equivalence problem for $Gl(2)$-structures. A $Gl(2)$-structure on a manifold $M$ is a frame bundle on $M$ with structure group $Gl(2)$ which acts irreducibly on fibres of the bundle. It follows that a splitting
$$
TM=\underbrace{E\odot\cdots\odot E}_{\dim M-1}
$$
is given for a certain rank two bundle $E$ over $M$, where $\odot$ is a symmetric tensor product. Equivalently, for any $x\in M$ we can identify the tangent space $T_xM$ and the space of homogeneous polynomials of order $\dim M-1$ in two variables with natural $Gl(2)$-action.

In the recent years, $Gl(2)$-structures (known also under the name \emph{paraconformal structures}) attract much attention due to their connections to ODEs. The first result in this direction goes back to S-S.~Chern \cite{Ch} who considered equations of order 3 and proved that if an equation satisfies the so-called W\"unschmann condition then it defines a canonical $Gl(2)$-structure on its solution space (in this case it is just a conformal Lorentz metric). The results in higher dimensions were obtained by R.~Bryant \cite{B}, M.~Dunajski, P.~Tod \cite{DT} and others \cite{DG,GN,FKN,K1}. Besides, $Gl(2)$-structures appear also in the theory of distributions. For instance, in the paper \cite{K2} a link was found between $Gl(2)$-structures and distributions of type $(3,5,6)$ with parabolic symbol, whereas in the paper \cite{KZ} a link was found between $Gl(2)$-structures and $(2k+1,2k+3)$-distributions with the so-called maximal first Kronecker index and integrable square root. 

In the case of dimension 3 all  germs of conformal Lorentz metrics on three dimensional manifolds can be obtained from ODEs \cite{FKN,K1}. Therefore the problem of equivalence of $Gl(2)$-structures on manifolds of dimension 3 is reduced to the problem of equivalence of ODEs of order 3. This problem was solved by Chern \cite{Ch} who has constructed a Cartan connection taking values in $\mathfrak{sp}(4,\R)$ (see also \cite{DKM}). On the contrary, if the dimension of an underlying manifold is greater than 3 then a generic $Gl(2)$-structure cannot be realised as an equation \cite{K1}. Therefore the problem of equivalence of $Gl(2)$-structures is more general in this case. We solve it in the present paper by constructing a canonical frame on a certain principal bundle and proving:
\vskip 1ex
\noindent\emph{Two $Gl(2)$-structures are equivalent if and only if the corresponding frames are equivalent.}
\vskip 1ex
This gives the complete answer to the problem of equivalence of $Gl(2)$-strucutres since the problem of equivalence of frames has a well known solution due to E.~Cartan (see \cite{O}).

It is worth to mention here that in fact we solve an equivalence problem for more general objects than $Gl(2)$-structures (see Section \ref{sec2}). As a by-product, we provide a new solution to the problem of contact equivalence of ODEs (not only for those ODEs which define $Gl(2)$-structures). In this case the coframe dual to the canonical frame defines a Cartan connection. Our results also apply to above-mentioned distributions of type $(3,5,6)$ or $(2k+1,2k+3)$ (we refer to \cite{K2} and \cite{KZ} for details).

\section{Preliminaries}\label{sec2}
A distribution $\DD$ on a manifold $M$ is a smooth subbundle of the tangent bundle. If $\DD$ has rank $m$ then in a neighbourhood of any $x\in M$ there exist vector fields $X_1,\ldots,X_m$ such that
$$
\DD=\spn\{X_1,\ldots,X_m\}
$$
and $(X_1,\ldots,X_m)$ is called \emph{local frame} of $\DD$. The set of smooth sections of $\DD$ is denoted $\Gamma(\DD)$. If $X\in \Gamma(\DD)$ then $X$ is a vector field tangent to $\DD$. If $\DD$ and $\EE$ are two distributions then we define their Lie bracket at a point $x\in M$ by the formula
$$
[\DD,\EE](x)=\spn\{[X,Y](x)\ |\ X\in\Gamma(\DD),\ Y\in\Gamma(\EE)\},
$$
where, on the right hand side $[X,Y]$ is the Lie bracket of vector fields. The definition implies that $\DD(x),\EE(x)\subset [\DD,\EE](x)$. Further, we  set $\ad_\EE\DD=[\EE,\DD]$ and define by induction
$$
\ad^{i+1}_\EE\DD=[\EE,\ad^i_\EE\DD].
$$
If $\dim\ad^i_\EE\DD(x)$ is a constant function of $x$ then $\ad^i_\EE\DD$ is a new distribution on $M$. In \cite{K1} we have introduced the following notion
\begin{definition}
A pair $(\XX,\DD)$ of two distributions on a manifold $M$ is \emph{regular} if $\rk\,\XX=1$, $\rk\,\DD=2$, $\XX\subset\DD$ and the following two conditions
\begin{enumerate}
\item[(G1)] $\rk\,\ad_\XX^i\DD=i+2$ for $i=1,\ldots,k$.
\item[(G2)] $\ad_\XX^k\DD=TM$.
\end{enumerate}
are satisfied.
\end{definition}

We will say that two regular pairs are \emph{equivalent} if there exists a diffeomorphism which transforms one pair onto the other. As we will see below, on the one hand regular pairs generalise the notion of ODEs and on the other hand they generalise the notion of $Gl(2)$-structures.
\vskip 1ex
{\bf Ordinary differential equations.}
If $(F)$ is an equation of order $k+1$ given in the form
$$
x^{(k+1)}=F(t,x,x',\ldots,x^{(k)})
$$
then we can consider $M=J^k(\R,\R)$, the space of $k$-jets, with the canonical rank two Cartan distribution $\CC$ and the rank one distribution $\XX_F$ spanned by the total derivative
$$
X_F=\partial_t+x_1\partial_{x_0}+\cdots+x_k\partial_{x_{k-1}} +F(t,x_0,\ldots,x_k)\partial_{x_k},
$$
where $(t,x_0,\ldots,x_k)$ are natural coordinates on $J^k(\R,\R)$. It is an easy task to check that the pair $(\XX_F,\CC)$ is regular. Moreover two equations are contact equivalent if and only if the corresponding regular pairs are equivalent. Regular pairs equivalent (locally) to $(\XX_F,\CC)$ will be called \emph{of equation type}.
\vskip 1ex
{\bf W\"unschman condition and $Gl(2)$-structures.}
The W\"unschman condition for a general ODE was introduced in \cite{DT}. It is equivalent to the vanishing of the Wilczynski-Doubrov invariants \cite{D}. In \cite{K1} we have shown that both Wilczynski invariants and W\"unschmann condition can be defined also for regular pairs. Namely, a regular pair $(\XX,\DD)$ satisfies the W\"unschmann condition if there exists a local frame $(X,V)$ of $\DD$ such that $\XX=\spn\{X\}$ and $\ad^{k+1}_XV=0\mod \XX$. We have proved the following
\begin{theorem}\label{t1}
There is one-to-one correspondence between germs of $Gl(2)$-structures and germs of regular pairs satisfying W\"unschmann condition.
\end{theorem}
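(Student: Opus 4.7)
The plan is to exhibit explicit mutually inverse constructions between the two categories of germs. Given a $Gl(2)$-structure on a $(k+1)$-dimensional manifold $N$ with $TN = \odot^k E$ for some rank-two bundle $E$, I would set $M := \mathbb{P}(E)$, which has dimension $k+2$, let $\pi : M \to N$ denote the projection, and take $\XX := \ker d\pi$ as the rank-one distribution. The rank-two distribution $\DD$ is defined pointwise by letting $\DD_{(x,[v])}$ consist of those $w \in T_{(x,[v])} M$ with $d\pi(w) \in \spn(v^k) \subset T_xN = \odot^k E_x$, i.e., $\XX$ together with the lift of the Veronese tangent.

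To see that $(\XX,\DD)$ is a regular pair satisfying Wünschmann, I would compute iterated brackets. Let $X$ generate $\XX$ with direction $w \in E_x/\spn(v)$ in the fibre of $\mathbb{P}(E)$, and choose $V \in \Gamma(\DD)$ projecting to $v^k$. A direct calculation modulo lower-order terms shows $d\pi(\ad^i_X V)$ is proportional to $v^{k-i} w^i \in \odot^k E_x$ for $0 \le i \le k$; hence the ranks grow as required by (G1)--(G2). At the step $i = k+1$ the horizontal projection to $\odot^k E_x$ vanishes (formally the exponent of $v$ would be negative), so $\ad^{k+1}_X V \in \XX$, which is exactly the Wünschmann condition.

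For the opposite direction, starting from a regular pair $(\XX,\DD)$ with Wünschmann on $M$, I would take $N$ to be the local leaf space of $\XX$ and reconstruct $E$ over $N$ from the filtration $\XX \subset \DD \subset \DD_1 \subset \cdots \subset \DD_k = TM$, where $\DD_i := \ad^i_\XX \DD$. The Wünschmann condition furnishes a linear differential operator of order $k+2$ acting on sections of $\DD/\XX$ along integral curves of $\XX$; its kernel along each leaf is a two-dimensional vector space, and these assemble to the desired rank-two bundle $E$ over $N$. The identification $TN \cong \odot^k E$ is obtained by sending the class of $\ad^i_X V$ modulo $\DD_{i-1}$ to $v^{k-i} w^i$, where $v, w \in E$ are determined by the chosen frame $(X,V)$.

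The two constructions are then checked to be mutually inverse on germs by matching the filtrations step by step. The main obstacle is the second direction: one must show that the Wünschmann condition is independent of the frame $(X,V)$ chosen within $\DD$, and that the associated order-$(k+2)$ operator has a constant two-dimensional kernel along each leaf of $\XX$. Both facts reduce to iterated-bracket manipulations exploiting condition (G1) together with the Leibniz and Jacobi identities; this is the core computation of \cite{K1} and is what makes the correspondence work in arbitrary dimension.
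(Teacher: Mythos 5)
First, a point of reference: the paper does not actually prove Theorem \ref{t1} --- it defers entirely to \cite{K1} --- so your proposal is being compared with the construction in that reference rather than with an argument in this text. Your forward direction is the right construction and matches \cite{K1}: take $M=\mathbb{P}(E)$, $\XX=\ker d\pi$, and $\DD$ the $d\pi$-preimage of the Veronese line $\spn(v^k)$. One caveat: the vanishing of $\ad^{k+1}_XV$ modulo $\XX$ is \emph{not} automatic for an arbitrary generator $X$ of $\XX$ and an arbitrary lift $V$. You must take $X=\partial_t$ for an affine coordinate $t$ on the fibre $\mathbb{P}(E_x)$ (so that points are $[v+tw]$) and $V$ a lift of exactly $(v+tw)^k$; then $d\pi(\ad^i_XV)=\tfrac{k!}{(k-i)!}(v+tw)^{k-i}w^i$ and the $(k+1)$-st bracket lands in $\ker d\pi=\XX$. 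Since the W\"unschmann condition only asserts the \emph{existence} of one such frame this suffices, but the phrase ``the exponent of $v$ would be negative'' hides precisely the choice that makes the computation work: for a non-affine parametrisation of the fibre, the $(k+1)$-st derivative of a section of the Veronese cone does not vanish.

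The backward direction contains a concrete error and the real gap. The error: a linear differential operator of order $k+2$ acting on sections of the line bundle $\DD/\XX$ along a leaf has a $(k+2)$-dimensional solution space, not a two-dimensional one, so the object you describe cannot yield the rank-two bundle $E$. What actually happens is that the order-$(k+1)$ operator $\ad^{k+1}_X$ in a normal frame, whose solution space is canonically $T_\gamma N$, is (because the W\"unschmann/Wilczynski conditions hold) the $k$-th symmetric power of a canonical \emph{second-order} operator on the leaf, and $E_\gamma$ is the two-dimensional solution space of that second-order operator; equivalently, one must show that the curve of lines $p\mapsto d\pi(\DD(p))\subset T_\gamma N$, as $p$ runs over the leaf $\gamma$, is a rational normal curve, which is what identifies $T_\gamma N$ with $\odot^kE_\gamma$. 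That implication --- the W\"unschmann normalisation of the frame forces the rational-normal-curve property of the curve of lines --- is the entire content of the converse, and your sketch defers it to unspecified ``iterated-bracket manipulations.'' So the architecture of the correspondence is correct and agrees with \cite{K1}, but as written the proof does not close: the one step that is genuinely nontrivial is exactly the one left unproved, and the mechanism you propose for recovering $E$ is dimensionally inconsistent.
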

A direct, geometric, construction relating $Gl(2)$-structures and regular pairs is given in \cite{K1} and we refer there for the proof of Theorem \ref{t1} above. Let us note here that if a $Gl(2)$-structure is defined on a manifold of dimension $k+1$ then the corresponding regular pair is defined on a manifold of dimension $k+2$ (where $k$ is a parameter from the definition of regular pairs).
\vskip 1ex
In the next sections we will consider the problem of equivalence of regular pairs. It follows from above that this problem contains both: the problem of equivalence of $Gl(2)$-structures and the problem of contact equivalence of ODEs. But, it is more general than both of them, provided that $k>2$. On the other hand, as we mentioned in the Introduction, the problem of equivalence of $Gl(2)$-structures on three dimensional manifolds has known solution. Therefore we will assume that $k>2$.

\section{Canonical bundle}\label{sec3}
In \cite{K1} we have proved the following
\begin{proposition}\label{p1}
For a given regular pair $(\XX,\DD)$ there exists a local frame $(X,V)$ of $\DD$ such that $\XX=\spn\{X\}$, $\DD=\spn\{X,V\}$ and
\begin{equation}\label{eq1}
\ad_X^{k+1}V=0\mod X,V,\ad_XV,\ldots,\ad_X^{k-2}V.
\end{equation}
If $(X',V')$ is a different frame of $\DD$ satisfying \eqref{eq1} then there exist functions $f,g$ on $M$ such that $X'=fX$, $V'=gV\mod X$ and
\begin{eqnarray}
&&fX(g)=-\frac{k}{2}X(f)g,\label{eq2}\\
&&2fX^2(f)-X(f)^2=0.\label{eq3}
\end{eqnarray}
\end{proposition}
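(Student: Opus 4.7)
The plan is to work modulo $X$ throughout and control the top-degree coefficients of $\ad_X^{k+1}V$ under an arbitrary gauge change. I would fix any initial local frame $(\bar X, \bar V)$ of $\DD$ with $\XX = \spn\{\bar X\}$, set $\bar V_i = \ad_{\bar X}^i \bar V$, and observe that the regularity conditions (G1)--(G2) make $(\bar X, \bar V_0, \ldots, \bar V_k)$ a local frame of $TM$. Expanding
$$\bar V_{k+1} = a_{-1} \bar X + \sum_{i=0}^{k} a_i \bar V_i,$$
condition \eqref{eq1} for the initial frame is equivalent to $a_k = a_{k-1} = 0$.

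Next I would pass to the general gauge $X = f\bar X$, $V = g\bar V + h\bar X$ (with $f, g$ nowhere vanishing) and set $W_i = \ad_X^i V$. Writing $W_i \equiv \sum_j d_{i,j} \bar V_j \pmod{X}$, the identity $[X, Z] \equiv f \ad_{\bar X}(Z) \pmod{X}$ for any vector field $Z$ yields the recursion $d_{i+1, j} = f\bar X(d_{i,j}) + f d_{i, j-1}$ with $d_{0,0} = g$; in particular $h$ drops out entirely. Induction then gives the closed forms $d_{i,i} = f^i g$ and $d_{i, i-1} = i f^i \bar X(g) + \binom{i}{2} f^{i-1} g \bar X(f)$, together with an analogous four-term polynomial expression for $d_{i, i-2}$. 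After re-expanding in the basis $\{W_j\}$, the requirement $W_{k+1} \equiv 0 \pmod{X, W_0, \ldots, W_{k-2}}$ turns into
$$d_{k+1, k+1} a_k + d_{k+1, k} = 0, \qquad d_{k+1, k+1} a_{k-1} + d_{k+1, k-1} = 0,$$
where the second equation is the form taken by the $W_{k-1}$-coefficient of $W_{k+1}$ once the first equation has been imposed (the off-diagonal correction coming through $d_{k, k-1}/d_{k,k}$ is then multiplied by zero).

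For existence, the first equation is a first-order linear ODE for $\log g$ along the integral curves of $\bar X$ (given any prescribed $f$), and the second, after substituting the first, becomes a second-order linear ODE for $f$ along the same curves; both are locally solvable. For uniqueness, take $(\bar X, \bar V) = (X, V)$ already satisfying \eqref{eq1}, so $a_k = a_{k-1} = 0$. Then any second frame $(X', V') = (fX, gV + hX)$ also satisfying \eqref{eq1} must yield $d_{k+1, k} = 0$ and $d_{k+1, k-1} = 0$ (with the recursion now driven by $X$). The first reads $(k+1) f^{k+1} X(g) + \binom{k+1}{2} f^k g X(f) = 0$, and division by $(k+1)f^k$ gives exactly \eqref{eq2}. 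For \eqref{eq3}, one $X$-derivative of \eqref{eq2} yields $fX^2(g) = -\tfrac{k}{2}(X(f)X(g) + gX^2(f)) - X(f)X(g)$, while \eqref{eq2} itself gives $X(f)X(g) = -\tfrac{k}{2f}gX(f)^2$. Substituting both into the four-term expression for $d_{k+1, k-1}$, all $g$-dependence factors out and the equation collapses to $\alpha X(f)^2 + \beta fX^2(f) = 0$ for explicit combinatorial constants $\alpha, \beta$ in $k$ with $\beta/\alpha = -2$, which is \eqref{eq3}.

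The main obstacle is this final cancellation. The recursion produces $d_{k+1, k-1}$ as a combination of the four monomials $f^{k-1}gX(f)^2$, $f^k X(f)X(g)$, $f^k gX^2(f)$, $f^{k+1}X^2(g)$ with binomial-sum coefficients such as $\binom{k+1}{3}$, $\binom{k+1}{2}$ and $\sum_{j=2}^k \binom{j}{2}(j-1)$; the fact that, after substitution from \eqref{eq2}, all $g$-dependence disappears and the surviving ratio of coefficients is precisely $-2$ is a short telescoping identity, but it is the substantive content of the proposition. Conceptually, \eqref{eq3} is the statement that $\sqrt{f}$ is $X$-affine along each integral curve of $\XX$, reflecting the projective reparametrization freedom of the unparametrized flow of $X$.
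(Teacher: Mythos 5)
Your argument is correct, and since the paper itself offers no proof here (it simply cites \cite[Proposition~4.1]{K1}), there is nothing in-text to compare it against; your computation is a legitimate self-contained substitute. The one step you flagged but did not carry out --- the final cancellation in $d_{k+1,k-1}=0$ --- does close exactly as you predict. With your recursion one gets
$$
d_{i,i-2}=\binom{i}{2}f^{i}\bar X^2(g)+\tfrac{1}{2}i(i-1)^2 f^{i-1}\bar X(f)\bar X(g)+\binom{i}{3}f^{i-1}g\bar X^2(f)+\Bigl(3\binom{i}{4}+\binom{i}{3}\Bigr)f^{i-2}g\bar X(f)^2,
$$
and substituting \eqref{eq2} (hence $X^2(g)=-\tfrac{k}{2}\tfrac{g}{f}X^2(f)+\tfrac{k(k+2)}{4}\tfrac{g}{f^2}X(f)^2$) into $d_{k+1,k-1}=0$ yields
$$
-\tfrac{k(k+1)(k+2)}{12}\,f^{k}g\,X^2(f)+\tfrac{k(k+1)(k+2)}{24}\,f^{k-1}g\,X(f)^2=0,
$$
which is \eqref{eq3} after clearing the common nonzero factor; so the ratio is indeed $-2$ for every $k$. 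Two minor points: in the existence step the equation for $f$ is not \emph{linear} (already in the flat case it reads $2f\bar X^2(f)=\bar X(f)^2$), but it is solved for $\bar X^2(f)$ with nonvanishing leading coefficient $-\tfrac{k(k+1)(k+2)}{12}f^kg$, so local solvability along integral curves of $\XX$ holds all the same; and it is worth stating explicitly that $\spn\{\bar X,W_0,\dots,W_{j}\}=\spn\{\bar X,\bar V_0,\dots,\bar V_{j}\}$ (triangularity with units $f^jg$ on the diagonal), which is what lets you read condition \eqref{eq1} as the vanishing of the $\bar V_{k}$- and $\bar V_{k-1}$-coefficients.
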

\begin{proof}
See \cite[Proposition 4.1]{K1}
\end{proof}
\begin{remark}
Let $\R\ni t\mapsto\gamma(t)\in M$ be a curve such that $X=\gamma_*\left(\frac{\partial}{\partial t}\right)$ and let $\R\ni t'\mapsto \vp(t')\in\R$ be such that $X'=\gamma_*\circ\vp_*(\frac{\partial}{\partial t'})$. Then \eqref{eq3} takes the form
$$
2\frac{\dddot\vp}{\dot\vp}-3\left(\frac{\ddot\vp}{\dot\vp}\right)^2=0,
$$
i.e. $\mathbb{S}(\vp)=0$ where $\mathbb{S}$ stands for Schwartz derivative. It follows that $t$ and $t'$ are related by M\"obius transformation
$$
t'=\frac{at+c}{bt+d}
$$
and thus we get $Gl(2)$-action on the set of solutions of \eqref{eq3}.
\end{remark}
\begin{definition}
Let $(\XX,\DD)$ be a regular pair and assume that $X\in\Gamma(\XX)$ and $V\in\Gamma(\DD)$ satisfy conditions of Proposition \ref{p1}. Then $X$ is called \emph{projective} vector field and $V$ is called a \emph{normal} vector field corresponding to $X$.
\end{definition}

Note that it follows from \eqref{eq3} that the set of projective vector fields is 2-parameter family (as a solution to ODE of order 2). Moreover, if we denote by $j^1\XX(x)$ the set of 1-jets of sections of $\XX$ at $x\in M$ then there is a natural bijection between $j^1\XX(x)$ and the set of projective vector fields restricted to a segment of the integral line of $\XX$ passing through $x$. We immediately get from the Remark above that the affine group $Aff(1)$ acts regularly on $j^1\XX(x)$, where
$$
Aff(1)=\left\{\ \left(\begin{array}{cc} a & b \\ 0 & 1\end{array}\right)\in Gl(2)\ |\ a,b\in\R,\ a\neq 0 \right\}.
$$

We are now in the position to construct a canonical principal bundle for a regular pair $(\XX,\DD)$ on a manifold $M$. Let $x\in M$ and
$$
E(\XX,\DD)(x)=\left(\DD(x)/\XX(x)\right)\times j^1\XX(x)
$$
In above $\DD(x)/\XX(x)$ is a quotient linear space of dimension 1 and $Gl(1)$ acts on $\DD(x)/\XX(x)$. We define
$$
E(\XX,\DD)=\bigcup_{x\in M}E(\XX,\DD)(x),
$$
and it follows that $E(\XX,\DD)$ is a principal $Gl(1)\oplus Aff(1)$-bundle over $M$. We call it \emph{the canonical bundle} of $(\XX,\DD)$. Note that the group $Gl(1)\oplus Aff(1)$ is isomorphic to the group $T(2)\subset Gl(2)$ of upper triangular matrices. The isomorphism is given by the formula
$$
Gl(1)\oplus Aff(1)\ni\left((c),\left(\begin{array}{cc}a & b \\ 0 & 1\end{array}\right)\right) \mapsto \left(\begin{array}{cc}ca & cb \\ 0 & c\end{array}\right)\in T(2).
$$
The canonical bundle $E(\XX,\DD)$ carries additional structures.
\vskip 1ex
{\bf Projections.}
First of all we have the projection to the base manifold
$$
\pi\colon E(\XX,\DD)\to M.
$$
Besides, we have the following natural projections
$$
\pi_D\colon E(\XX,\DD)\to \DD/\XX,\qquad \pi_X\colon E(\XX,\DD)\to\XX,\qquad\pi_X^1\colon E(\XX,\DD)\to j^1\XX.
$$
\vskip 1ex
{\bf Fundamental vector fields.}
Since $Gl(1)\oplus Aff(1)$ acts on fibres of $E(\XX,\DD)$, any vector $\vec a$ in Lie algebra $\mathfrak{gl}(1)\oplus \mathfrak{aff}(1)$ defines a fundamental vector field, denoted $\mathbf{A}$, on $E(\XX,\DD)$. We will distinguish the following fundamental vector fields
\begin{enumerate}
\item $\BG$ - fundamental vector field corresponding to $(1)\in\mathfrak{gl}(1)$,
\item $\BF^0$ - fundamental vector field corresponding to $\left(\begin{array}{cc}1 & 0 \\ 0 & 0\end{array}\right)\in\mathfrak{aff}(1)$,
\item $\BF^1$ - fundamental vector field corresponding to $\left(\begin{array}{cc}0 & 1 \\ 0 & 0\end{array}\right)\in\mathfrak{aff}(1)$.
\end{enumerate}
We will abbreviate $\BF=(\BF^0,\BF^1)$. Note that $\BF^1\in\ker{\pi_D}_*\cap\ker{\pi_X}_*$.
\vskip 1ex
{\bf Partial Ehresmann connection.}
There is a natural rank one distribution $\widetilde{\XX}$ on $E(\XX,\DD)$ such that $\pi_*(\widetilde{\XX})=\XX$. Namely, it follows from Proposition \ref{p1} (equations \eqref{eq2}-\eqref{eq3}) that any point $p\in E(\XX,\DD)$ can be uniquely extended to a curve $t\mapsto p(t)$ such that $p(0)=p$, $t\mapsto X(t)=\pi_X(p(t))$ is a projective vector field along the integral line of $\XX$ passing through $\pi(p(0))$ and $t\mapsto V(t)=\pi_D(p(t))\mod\XX$ is a corresponding normal vector field along the integral line of $\XX$. We define $\widetilde{\XX}(p)=\spn\{\dot{p}(0)\}$.

\section{Canonical frame}\label{sec4}
Our aim is to choose additional vector fields: $\BX$ and $\BV^i$, $i=0,\ldots,k$ such that the tuple 
$$
(\BG,\BF^0,\BF^1,\BX,\BV^0,\ldots,\BV^k)
$$
constitutes a frame on $E(\XX,\DD)$.
\begin{definition}
A frame $(\BG,\BF^0,\BF^1,\BX,\BV^0,\ldots,\BV^k)$ on $E(\XX,\DD)$ is \emph{adapted} if the following conditions
\begin{enumerate}
\item $\pi_*(\BV^0(p))=\pi_D(p)$ for any $p\in E(\XX,\DD)$,
\item $\pi_*(\BX(p))=\pi_X(p)$ and $\BX(p)\in \widetilde{\XX}$ for any $p\in E(\XX,\DD)$,
\item $\BV^i=\ad^i_\BX\BV^0$ for any $i=1,\ldots,k$,
\end{enumerate}
are satisfied.
\end{definition}

Note that the vector field $\BX$ is uniquely defined by condition (2) above, since $\widetilde{\XX}$ is a rank one distribution and $\pi_*(\BX(p))=\pi_X(p)$ chooses a unique vector in it. In order to define $\BV^i$ uniquely we will make a normalisation. For this we introduce coefficients $T^{pq}_r$ in the following way
$$
[\BV^p,\BV^q]=\sum_{r=0}^kT^{pq}_r\BV^r\mod \BX,\BG,\BF.
$$
\begin{remark}
If one defines $\widetilde{\HH}=\spn\{\BX,\BV^0,\ldots,\BV^k\}$ then $\widetilde{\HH}$ is a distribution transversal to fibres of $\pi\colon E(\XX,\DD)\to M$, i.e.\ an Ehresmann connection. Then $T^{pq}_r$ can be called torsion coefficients.
\end{remark}

Now we are in position to prove our main result.
\begin{theorem}\label{t2}
Let $(\XX,\DD)$ be a regular pair on a manifold $M$ of dimension $n=k+2$, where $k\geq 3$. There exists the unique adapted frame on $E(\XX,\DD)$ satisfying conditions
\begin{equation}\label{eq4}
T^{01}_0=0,\qquad T^{01}_1=0,\qquad T^{01}_2=0,\qquad T^{03}_3=0
\end{equation}
Two pairs $(\XX,\DD)$ and $(\XX',\DD')$ are equivalent if and only if the corresponding frames on $E(\XX,\DD)$ and $E(\XX',\DD')$ are diffeomorphic.
The group of symmetries of $(\XX,\DD)$ is at most $k+5$-dimensional and it attains maximal possible dimension if and only if $(\XX,\DD)$ is equivalent to the regular pair corresponding to the trivial equation: $x^{(k+1)}=0$.  The following structural equations are satisfied:
\begin{eqnarray}
&&[\BG,\BX]=0,\label{s1}\\
&&[\BF^0,\BX]=-\BX,\label{s2}\\
&&[\BF^1,\BX]=-2\BF^0-k\BG,\label{s3}\\
&&[\BG,\BV^i]=\BV^i,\label{s4}\\
&&[\BF^0,\BV^i]=-i\BV^i,\label{s5}\\
&&[\BF^1,\BV^j]=i(i-1-k)\BV^{i-1}\mod \BX,\BG,\BF.\label{s6}
\end{eqnarray}
\end{theorem}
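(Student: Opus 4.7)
The plan is to fix the frame step by step according to conditions (1)--(3) of the definition of adapted frame, and then use the four normalisations in \eqref{eq4} to remove the residual freedom. The condition $\BX(p)\in\widetilde{\XX}(p)$ together with $\pi_*(\BX(p))=\pi_X(p)$ determines $\BX$ uniquely, because $\widetilde{\XX}$ has rank one and $\pi_X(p)$ is a distinguished nonzero vector of $\XX$. Condition (1) determines $\BV^0$ only modulo $\ker\pi_*\oplus\R\BX=\spn\{\BG,\BF^0,\BF^1,\BX\}$, hence a four-parameter ambiguity
$$\BV^0\mapsto\BV^0+a\BX+b\BG+c\BF^0+d\BF^1.$$
Condition (3) then forces $\BV^i=\ad_\BX^i\BV^0$ for $i\geq 1$, so the entire frame depends on those four functions $a,b,c,d$ on $E(\XX,\DD)$.

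First I would establish the structural equations \eqref{s1}--\eqref{s3} directly from the construction of $\widetilde{\XX}$ and the group action. Equation \eqref{s1} follows because the $Gl(1)$-factor rescales only the $\DD/\XX$-component while preserving the projective vector field, hence preserves $\widetilde{\XX}$ and $\pi_X$. Equation \eqref{s2} expresses the rescaling $X\mapsto e^tX$ induced by $\BF^0$ on the projective vector field. Equation \eqref{s3} encodes how translation in $Aff(1)$ shifts the $1$-jet of a projective vector field along its own trajectory, the specific coefficients $-2$ and $-k$ being read off from \eqref{eq2}--\eqref{eq3}.

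With \eqref{s1}--\eqref{s3} in hand, I would compute how the torsions $T^{01}_0,T^{01}_1,T^{01}_2,T^{03}_3$ transform under the shift $\BV^0\mapsto\BV^0+a\BX+b\BG+c\BF^0+d\BF^1$. A direct Jacobi calculation, combined with $\BV^i=\ad_\BX^i\BV^0$ and \eqref{s1}--\eqref{s3}, shows that the shift induces an affine transformation of $(T^{01}_0,T^{01}_1,T^{01}_2,T^{03}_3)$ whose linear part is a $4\times 4$ matrix with entries that are explicit polynomials in $k$. The main obstacle is to verify that this matrix is nondegenerate under the standing assumption $k\geq 3$; this is where the lower bound on $k$ enters, and it explains why precisely four normalisation conditions of this particular form are chosen. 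Nondegeneracy then yields a unique solution $(a,b,c,d)$, and thus the unique adapted frame satisfying \eqref{eq4}.

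The remaining structural equations \eqref{s4}--\eqref{s6} follow from \eqref{s1}--\eqref{s3} by induction on $i$ via the Jacobi identity. For instance,
$$[\BG,\BV^i]=[\BG,[\BX,\BV^{i-1}]]=[\BX,[\BG,\BV^{i-1}]]+[[\BG,\BX],\BV^{i-1}],$$
which together with the inductive hypothesis and \eqref{s1} yields $[\BG,\BV^i]=\BV^i$; the analogous computations with \eqref{s2} and \eqref{s3} produce \eqref{s5} and \eqref{s6}, the factor $i(i-1-k)$ in the latter arising as a two-term accumulation at each step of the induction. The equivalence assertion is then a direct application of Cartan's theorem on equivalence of frames quoted in the Introduction: the adapted frame is canonical, so any equivalence of regular pairs lifts uniquely to a diffeomorphism of the canonical bundles intertwining the adapted frames, and conversely. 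Finally, the group of lifted symmetries acts freely on $E(\XX,\DD)$, whose dimension is $(k+2)+3=k+5$, so the symmetry group has dimension at most $k+5$. Equality forces all structure functions of the adapted frame to be constant, and a direct comparison with the frame constructed from the trivial equation $x^{(k+1)}=0$ identifies the maximal model.
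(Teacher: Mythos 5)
Your overall strategy matches the paper's: fix $\BX$ via $\widetilde{\XX}$, note the four-function ambiguity $\BV^0\mapsto\BV^0+a\BX+b\BG+c\BF^0+d\BF^1$, and use the four conditions \eqref{eq4} to kill it. But two steps are not established as you describe them. First, the normalisation is \emph{not} a pointwise affine system in $(a,b,c,d)$ with a $4\times4$ linear part to be checked for nondegeneracy. Writing $\BV^0=GV+\alpha\BX+\beta\BG+\gamma_0\BF^0+\gamma_1\BF^1$ in the paper's coordinates, the conditions \eqref{eq4} become
$\BX(\beta)+2k\gamma_1=C_1$, $\BX(\alpha)+2\gamma_0-\beta=C_2$, $\alpha=C_3$, $\beta-3\gamma_0=C_4$:
two of the equations involve derivatives of the unknowns along $\BX$. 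Unique solvability comes from the triangular structure (solve for $\alpha$ algebraically, substitute $\BX(\alpha)$, solve for $\beta,\gamma_0$, then for $\gamma_1$), not from inverting a matrix of polynomials in $k$; the role of $k\geq 3$ is mainly to guarantee that $\BV^3$ exists so that $T^{03}_3$ makes sense. Your proposed "main obstacle" is therefore the wrong one, and a proof written along your lines would stall when the derivative terms appear.

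Second, and more seriously, your induction for \eqref{s4}--\eqref{s6} has no base case. The Jacobi-identity induction step is exactly what the paper does, but it reduces everything to $i=0$, and proving $[\BG,\BV^0]=\BV^0$ and $[\BF^0,\BV^0]=0$ \emph{exactly} (rather than merely modulo $\BX,\BG,\BF$, which is all that is obvious from the form of $\BV^0$) is the hard part: in coordinates it amounts to a homogeneity property of $\alpha,\beta,\gamma_0,\gamma_1$ that is not evident from their defining equations. The paper handles this by setting $[\BG,\BV^0]=\BV^0+\Phi$, $[\BF^0,\BV^0]=\Psi$ with $\Phi,\Psi\in\spn\{\BX,\BG,\BF\}$, showing via the Jacobi identity that the frames generated by $\BV^0+\Phi$ and $\BV^0+\Psi$ again satisfy \eqref{eq4}, and concluding $\Phi=\Psi=0$ from the already-proved uniqueness. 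You would need this (or an equivalent) argument. A smaller but real gap of the same kind occurs at the end: "constant structure functions" does not by itself identify the trivial equation; the paper needs the homogeneity of the $\BV^i$ in $G$ and $F_0$ to conclude that the constant structure functions of $[\BV^i,\BV^j]$ and of $[\BX,\BV^k]$ actually vanish, which is what pins down the flat model.
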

\begin{proof}
Let us first introduce local coordinates on $E(\XX,\DD)$. Let $X$ be a projective vector field and let $V$ be a corresponding normal section of $\DD$. Let $j^1X(x)$ denotes a 1-jet of $X$ at $x\in M$ and let $\widetilde{V}(x)= V(x)\mod \XX$ be an element of the quotient space $\DD(x)/\XX(x)$. Then arbitrary $p\in E(\XX,\DD)(x)$ can be uniquely written in the form $p=R_{(G,F)}(\widetilde{V}(x), j^1X(x))$ for certain $G\in Gl(1)$ and
$$
F=\left(\begin{array}{cc}
F_0&F_1\\
0&1\\
\end{array}\right)\in Aff(1),
$$
where $R_{(G,F)}$ stands for the right action of the group on the bundle. In this way local coordinates $(G,F)$ on fibres of $E(\XX,\DD)$ are introduced. Using the coordinates we can treat $X$ and $V$ as vector fields on $E(\XX,\DD)$, not only on $M$. Moreover, it is straightforward to see that in the coordinates
$$
\BG=G\partial_G,\qquad \BF^0=F_0\partial_{F_0},\qquad \BF^1=F_0\partial_{F_1}.
$$
Additionally we have
\begin{lemma}\label{l1}
$$
\BX=\frac{1}{F_0}X -2\frac{F_1}{F_0}\BF^0 -\frac{(F_1)^2}{(F_0)^2}\BF^1 -k\frac{F_1}{F_0}\BG.
$$
\end{lemma}
\begin{proof}
Let us assume that $X=\partial_t$ is a projective vector field and let $f$ and $g$ be functions from Proposition \ref{p1}. Then $x_0=f$, $x_1=\dot f$ and $y=g$ can be taken as coordinates on fibres of $E(\XX,\DD)$. Then, equations \eqref{eq2} and \eqref{eq3} imply that a partial Ehresmann connection $\widetilde{\XX}$ on $E(\XX,\DD)$ is given by the formula
$$
\widetilde{\XX}=\spn\left\{\partial_t+x_1\partial_{x_0} +\frac{x_1^2}{2x_0}\partial_{x_1} -\frac{1}{2}\frac{x_1}{x_0}y\partial_y\right\},
$$
since $\dot g=-\frac{k}{2}\frac{\dot f}{f}g$ and $\ddot f=\frac{\dot f^2}{2f}$. We are going to change coordinate systems from $(x_0,x_1,y)$ to $(F_0,F_1,G)$. First of all $G=y$. Moreover, if $t'=\frac{F_0t}{F_1t+1}$ is a parameter corresponding to the projective vector field $fX$ then on the submanifold $\{t=0\}$
$$
f(0)=\frac{1}{F_0},\qquad \dot f(0)=\frac{2F_1}{F_0}.
$$
Thus $(F_0,F_1)$ and $(x_0,x_1)$ are related by the diffeomorphism $\Phi\colon(F_0,F_1)\mapsto\left(\frac{1}{F_0},\frac{2F_1}{F_0}\right)$. Computing $D\Phi^{-1}$ and applying it to $\widetilde{\XX}$ we get
$$
\widetilde{\XX}=\spn\left\{X-2F_1\BF^0 -\frac{(F_1)^2}{F_0}\BF^1 -kF_1\BG\right\}.
$$
Taking into account the condition $\pi_*(\BX(p))=\pi_X(p)$ we get the desired formula for $\BX$ and Lemma \ref{l1} is proved.
\end{proof}

The rest of the proof of Theorem \ref{t2} is divided into three parts. At the beginning we will construct canonical frame, then we will show that structural equations are satisfied and finally we will consider the most symmetric case.
\vskip 1ex
{\bf Construction of the canonical frame.}
We will work in local coordinates as before. We can write
$$
\BV^0=GV+\alpha\BX+\beta\BG+\gamma_0\BF^0+\gamma_1\BF^1.
$$
Our aim is to prove that functions $\alpha,\beta,\gamma_0,\gamma_1$ are uniquely defined by \eqref{eq4}.

We use Lemma \ref{l1} and compute that
\begin{eqnarray*}
\BV^1&=&[\BX,\BV^0_j]\\
&=&\frac{G}{F_0}(\ad_XV-kF_1V)+(\BX(\alpha)+\gamma_0)\BX
+\left(\BX(\beta)+k\gamma_1\right)\BG \mod \BF.
\end{eqnarray*}
Then, by induction, we get
$$
\BV^i=\frac{G}{(F_0)^i}\left(\ad^i_XV+c^i_{i-1}F_1\ad^{i-1}_XV +\cdots+c^i_1F_1^{i-1}\ad_XV+c^i_0F_1^iV\right)\mod \BX,\BG,\BF,
$$
for $i=2,\ldots,k$ where $c^i_j$ are certain rational numbers which exact values are not important for us. We compute that
\begin{eqnarray*}
[\BV^0,\BV^1]&=&\frac{G^2}{F_0}[V,\ad_XV]-(\BX(\beta)+2k\gamma_1)\BV^0\\
&-&(\BX(\alpha)-\beta+2\gamma_0)\BV^1+\alpha\BV^2 \mod\BX,\BG,\BF,
\end{eqnarray*}
and
\begin{eqnarray*}
[\BV^0,\BV^3]&=&\frac{G^2}{(F_0)^3}\left( \sum_{j=0}^3c^3_jF_1^{3-j}[V,\ad^j_XV]\right)\\
&+&(\beta-3\gamma_0)\BV^3+\alpha\BV^4\mod\BV^0,\BV^1,\BV^2,\BX,\BG,\BF,
\end{eqnarray*}
where in above we assumed that $k\geq 4$. If $k=3$ then the term $\alpha\BV^4$ is replaced by $\tilde c\alpha\BV^3$ for a certain function $\tilde c$ on $E(\XX,\DD)$, but it does not change the reasoning below.

Since vector fields $\ad^i_XV$, $i=0,\ldots,k$, together with $X$ span the whole tangent bundle $TM$, we can express the Lie bracket $[V,\ad^j_XV]$ in terms of $\ad^i_XV$ and $X$. However, we can also write $[V,\ad^j_XV]=\sum_ic_i^j\BV^i_j\mod \BX,\BG,\BF$, for some functions $c_i^j$ on $E(\XX,\DD)$. Then we are able to rewrite condition \eqref{eq4} in terms of unknown functions  $\alpha$, $\beta$, $\gamma_0$ and $\gamma_1$. We get the following system of equations
\begin{eqnarray}
\BX(\beta)+2k\gamma_1&=&C_1,\label{u1}\\
\BX(\alpha)+2\gamma_0-\beta&=&C_2,\label{u2}\\
\alpha&=&C_3,\label{u3}\\
\beta-3\gamma_0&=&C_4\label{u4}
\end{eqnarray}
where $C_1$, $C_2$, $C_3$ and $C_4$ are certain functions on $E(\XX,\DD)$ (in the case $k=3$ equation \eqref{u4} takes the form $\beta-3\gamma_0+\tilde c\alpha=C_4$). Now, the system \eqref{u1}-\eqref{u4} can be easily solved for $\alpha$, $\beta$, $\gamma_0$ and $\gamma_1$ and the solution is unique. This proves the first part of Theorem \ref{t2}.
\vskip 1ex
{\bf Structural equations.}
At the beginning we use Lemma \ref{l1} and directly compute that \eqref{s1}-\eqref{s3} are satisfied. To prove \eqref{s4}-\eqref{s6} it is sufficient to show that they are satisfied for $i=0$. Then, we consider $[\BX,[\BG,\BV^i]]$, $[\BX,[\BF^0,\BV^i]]$ and $[\BX,[\BF^1,\BV^i]]$ and apply Jacobi identity. By a simple induction, using \eqref{s1}-\eqref{s3} we get \eqref{s4}-\eqref{s6} in full generality. For example, taking $[\BX,[\BF^1,\BV^i]]$, on the one hand we get
$$
[\BX,[\BF^1,\BV^i]]=[\BF^1,\BV^{i+1}]+[2\BF^0+k\BG,\BV^i] =[\BF^1,\BV^{i+1}]+(k-2i)\BV^i\mod\BX,\BG,\BF
$$
and on the other hand we get
$$
[\BX,[\BF^1,\BV^i]]=i(i-1-k)[\BX,\BV^{i-1}]=i(i-1-k)\BV^i\mod\BX,\BG,\BF.
$$
Combining the two expressions above we get
$$
[\BF^1,\BV^{i+1}]=(i(i-1-k)-k+2i)\BV^i=(i+1)(i-k)\BV^i\mod\BX,\BG,\BF
$$
as required.

A validation of relation \eqref{s6} for $i=0$ is immediate taking into account the formula for $\BV^0$.  Moreover, it is also clear, that \eqref{s4} and \eqref{s5} also hold modulo $\BX$, $\BG$ and $\BF$. Thus we can assume that $[\BG,\BV^0]=\BV^0+\Phi$ and $[\BF^0,\BV^0]=\Psi$ for some $\Phi,\Psi\in\spn\{\BX,\BG,\BF\}$. We shall show that $\Phi=\Psi=0$. For this, let us notice first that condition \eqref{eq4} is equivalent to
\begin{eqnarray}
&&[\BV^0,\BV^1]=0 \mod\BV^3,\ldots,\BV^k,\BX,\BG,\BF,\label{eq5}\\
&&[\BV^0,\BV^3]=0 \mod\BV^0,\BV^1,\BV^2,\BV^4,\ldots,\BV^k,\BX,\BG,\BF\label{eq6}
\end{eqnarray}
and we already know that Lie bracket with $\BG$ and $\BF^0$ preserve the right hand side of \eqref{eq5} and \eqref{eq6}. Therefore it follows that
\begin{eqnarray}
&&[\BG,[\BV^0,\BV^1]]=0\mod\BV^3,\ldots,\BV^k,\BX,\BG,\BF,\label{eq7}\\
&&[\BF^0,[\BV^0,\BV^1]]=0 \mod\BV^3,\ldots,\BV^k,\BX,\BG,\BF\label{eq7b}
\end{eqnarray}
and
\begin{eqnarray}
&&[\BG,[\BV^0,\BV^3]]=0 \mod\BV^0,\BV^1,\BV^2,\BV^4,\ldots,\BV^k,\BX,\BG,\BF\label{eq8}\\
&&[\BF^0,[\BV^0,\BV^3]]=0 \mod\BV^0,\BV^1,\BV^2,\BV^4,\ldots,\BV^k,\BX,\BG,\BF.\label{eq8b}
\end{eqnarray}
Now, if we expand the left hand sides of \eqref{eq7}-\eqref{eq8b} and use \eqref{eq5}-\eqref{eq6} we get that both $\widetilde{\BV}^i=\ad^i_\BX(\BV^0+\Phi)$ and $\widehat{\BV}^i=\ad^i_\BX(\BV^0+\Psi)$ satisfy condition \eqref{eq4}.

For instance, Jacobi identity applied twice to \eqref{eq7b} implies
$$
[\Psi,\BV^1]+[\BV^0,\BV^1+[\BX,\Psi]]=0\mod\BV^3,\ldots,\BV^k,\BX,\BG,\BF
$$
and since $[\Psi,[\BX,\Psi]]=0\mod\BX,\BG,\BF$ we get
$$
[\widehat{\BV}^0,\widehat{\BV}^1]=0\mod\BV^3,\ldots,\BV^k,\BX,\BG,\BF.
$$
Similarly, applying Jacobi identity to \eqref{eq8b}, taking into account the relation $[\BF^0,\BV^3]=3\BV^3\mod\BX,\BG,\BF$ and using \eqref{eq6} we get
$$
[\Psi,\BV^3]=0\mod\BV^0,\BV^1,\BV^2,\BV^4,\ldots,\BV^k,\BX,\BG,\BF.
$$
Then it follows that
$$
[\widehat{\BV}^0,\widehat{\BV}^3]=0 \mod\BV^0,\BV^1,\BV^2,\BV^4,\ldots,\BV^k,\BX,\BG,\BF
$$
and thus $\widehat{\BV}^i$ satisfy \eqref{eq4} (the reasoning for $\widetilde{\BV}^i$ is analogous).

Now, the two frames: $(\BG,\BF^0,\BF^1,\BX,\widetilde{\BV}^0,\ldots,\widetilde{\BV}^k)$ and $(\BG,\BF^0,\BF^1,\BX,\widehat{\BV}^0,\ldots,\widehat{\BV}^k)$ are adapted. But we have already proved that there is a unique adapted frame satisfying \eqref{eq4}. Therefore $\Phi=\Psi=0$.
\begin{remark}
The Lie bracket with $\BF^1$ does not preserve the right hand side of \eqref{eq5} and \eqref{eq6}. Therefore equation \eqref{s6} is satisfied modulo $\BX$, $\BG$ and $\BF$ only.
\end{remark}

{\bf Uniqueness of the model with maximal symmetry group.}
If the dimension of the symmetry group is maximal possible then structural functions of the canonical frame are constant. It follows from the structural equations that we only have to consider Lie brackets involving $\BV^i$ in order to determine when all structural functions are constant. Note that vector fields $\BV^i$ are linear in $G$. On the other hand $\BX$, $\BG$, $\BF$ are homogeneous of order 0 in $G$. It follows that structural functions of the Lie brackets $[\BV^i,\BV^j]$ are either homogeneous of order one or two. Thus, in the most symmetric case all of them vanish. Similarly the structural functions next to $\BX$, $\BG$ and $\BF$ of Lie brackets $[\BF^0,\BV^i]$ and $[\BF^1,\BV^i]$ vanish and the remaining, possibly non-trivial, structural functions are those next to $\BV^j$ for the Lie bracket $[\BX,\BV^k]$. We have
$$
[\BX,\BV^k]= w_0\BV^0+\cdots+w_{k}\BV^{k},
$$
for some $w_i$. We claim that all $w_i$ are homogeneous of order $k+1-i$ in $F_0$. Indeed in local coordinates on $E(\XX,\DD)$ the vector fields $\BV^i$ are homogeneous of order $-i$ in $F_0$ (at least modulo $\BX,\BG,\BF$). Additionally we see that $[\BX,\BV^k]$ is of order $-(k+1)$. Thus $w_i$ is homogeneous of order $k+1-i$. As a conclusion we get that all $w_i$ vanish provided that they are constant. In this way we have proved uniqueness of the most symmetric model of regular pairs $(\XX,\DD)$. On the other hand it is well known that the trivial system $x^{(k+1)}=0$ has group of contact symmetries of dimension equal to $\dim E(\XX,\DD)$. In this way the proof of Theorem \ref{t2} is completed.
\end{proof}

As a direct consequence of Theorem \ref{t2} and Theorem \ref{t1} we get the solution to the problem of equivalence of $Gl(2)$-structures. In particular we get that a $Gl(2)$ on $k+1$-dimensional manifold has $k+5$-dimensional algebra of infinitesimal symmetries if and only if it is flat.

\section{Appendix. Remarks on ODEs}\label{sec5}
In Appendix we will provide some more remarks on our canonical frame in the case of regular pairs of equation type. In \cite{K1} we have proved that a pair $(\XX,\DD)$ is of equation type (in the sense of Section \ref{sec2}) if $\rk\,\DD^{(i)}=i+1$, $i=0,\ldots,k$, where $\DD^{(1)}=\DD$ and
$$
\DD^{(i+1)}=[\DD^{(i)},\DD^{(i)}]
$$
i.e.~$\DD$ is Goursat distribution. Since $\ad_\XX^{i-1}\DD\subset \DD^{(i)}$ it follows from (G1)-(G2) that $\ad_\XX^{i-1}\DD=\DD^{(i)}$.

Now, for a general pair $(\XX,\DD)$, let us denote $\widetilde{\DD}=\pi^{-1}_*(\DD)$, where as before $\pi\colon E(\XX,\DD)\to M$ is the projection. Then, since $\widetilde{\DD}=\{\BV^0,\BX,\BG,\BF\}$, it follows from \eqref{s1}-\eqref{s6} that
$$
\widetilde{\DD}^{(i)}=\pi^{-1}_*(\DD^{(i)})
$$
and we get the following characterisation of regular pairs of equation type in terms of torsion invariants $T^{pq}_r$.
\begin{proposition}\label{p2}
If $(\XX,\DD)$ is of equation type then
$$
\widetilde{\DD}^{(i)}=\ad^{i-1}_\BX\widetilde{\DD} =\spn\{\BV^0,\ldots,\BV^{i-1},\BX,\BG,\BF\}.
$$
In particular $T^{pq}_r=0$ for $r>\max\{p,q\}+1$. Conversely, if $T^{pq}_r=0$ for $r>\max\{p,q\}+1$ then $\widetilde{\DD}^{(i)}=\ad^{i-1}_\BX\widetilde{\DD}$ and $(\XX,\DD)$ is of equation type.
\end{proposition}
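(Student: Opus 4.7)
Set $\BH_i:=\spn\{\BV^0,\ldots,\BV^{i-1},\BX,\BG,\BF\}$, a rank $i+4$ subbundle of $TE(\XX,\DD)$ since these vectors are part of the canonical frame. I would first observe that the equality $\ad^{i-1}_\BX\widetilde{\DD}=\BH_i$ holds for \emph{any} regular pair, by induction on $i$: the base case $i=1$ is just the definition of $\widetilde{\DD}$, and the inductive step follows from $[\BX,\BV^{j}]=\BV^{j+1}$ together with $[\BX,\BG]=0$, $[\BX,\BF^0]=\BX$ and $[\BX,\BF^1]=2\BF^0+k\BG$ (i.e.\ \eqref{s1}--\eqref{s3}). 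The proposition then reduces to identifying $\widetilde{\DD}^{(i)}$ with $\BH_i$ in both directions.

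For the direct implication, I would invoke the general submersion principle: because $\widetilde{\DD}$ contains the vertical distribution $\ker\pi_*=\spn\{\BG,\BF^0,\BF^1\}$, an easy induction on $i$ yields $\widetilde{\DD}^{(i)}=\pi^{-1}_*(\DD^{(i)})$. When $(\XX,\DD)$ is of equation type one has $\rk\DD^{(i)}=i+1$, so $\rk\widetilde{\DD}^{(i)}=i+4=\rk\BH_i$; and the reverse inclusion $\BH_i\subset\widetilde{\DD}^{(i)}$ is immediate since $\BX,\BV^0,\BG,\BF\in\widetilde{\DD}$ and $\BV^j=\ad^j_\BX\BV^0\in\widetilde{\DD}^{(j+1)}$ for $j\leq i-1$. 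A dimension count closes the argument. The torsion bound then follows at once: $\BV^p\in\BH_{p+1}$ and $\BV^q\in\BH_{q+1}$ give $[\BV^p,\BV^q]\in\BH_{\max\{p,q\}+2}$, which by the definition of $T^{pq}_r$ modulo $\BX,\BG,\BF$ says exactly $T^{pq}_r=0$ for $r>\max\{p,q\}+1$.

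For the converse, assuming the torsion vanishing, I would prove $\widetilde{\DD}^{(i)}=\BH_i$ by induction, the step reducing to $[\BH_i,\BH_i]=\BH_{i+1}$. The inclusion $\BH_{i+1}\subset[\BH_i,\BH_i]$ is supplied by $[\BX,\BV^{i-1}]=\BV^i$. For the reverse inclusion I would check bracket by bracket that nothing overshoots $\BH_{i+1}$: brackets $[\BV^p,\BV^q]$ with $p,q\leq i-1$ land in $\BH_{i+1}$ by the torsion hypothesis (since $\max\{p,q\}+1\leq i$); $[\BX,\BV^p]=\BV^{p+1}$ with $p+1\leq i$; brackets with $\BG,\BF^0,\BF^1$ remain inside $\BH_i$ by \eqref{s4}--\eqref{s6}; and brackets within $\spn\{\BX,\BG,\BF\}$ remain there by \eqref{s1}--\eqref{s3} together with the Lie algebra structure on $\ker\pi_*$. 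Projecting, $\DD^{(i)}=\pi_*(\BH_i)$ has rank at most $i+1$, and combined with the regularity bound $\rk\DD^{(i)}\geq\rk\ad^{i-1}_\XX\DD=i+1$ this forces equality, hence equation type. The subtle point---and, in my view, the main obstacle---is that \eqref{s6} is only known modulo $\BX,\BG,\BF$, so one must verify that the undetermined remainder is harmlessly absorbed by the $\spn\{\BX,\BG,\BF\}\subset\BH_i$ part of the span, rather than producing an unwanted $\BV^i$-component that would destroy the inductive bookkeeping.
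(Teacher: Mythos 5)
Your argument is correct and follows the same route the paper intends (the paper leaves Proposition \ref{p2} essentially proofless, relying on the preceding remarks): identify $\ad^{i-1}_\BX\widetilde\DD$ with $\spn\{\BV^0,\ldots,\BV^{i-1},\BX,\BG,\BF\}$ via the adapted-frame relations, use $\widetilde\DD^{(i)}=\pi_*^{-1}(\DD^{(i)})$ together with the Goursat characterisation $\rk\DD^{(i)}=i+1$ of equation type, and close both directions by a dimension count. Your flagged ``subtle point'' about \eqref{s6} is in fact harmless for exactly the reason you give, since the undetermined remainder lies by definition in $\spn\{\BX,\BG,\BF\}$.
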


For regular pairs of equation type we can strengthen Theorem \ref{t2}.
\begin{proposition}\label{p3}
If $(\XX,\DD)$ is of equation type then
\begin{equation}\label{s6b}
[\BF^1,\BV^j]=i(i-1-k)\BV^{i-1}.
\end{equation}
\end{proposition}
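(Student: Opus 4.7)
The strengthening relative to Theorem \ref{t2} amounts to removing the modulo clause in \eqref{s6} and promoting it to an exact identity under the equation-type hypothesis. My plan is to reduce the problem to the base case $i = 0$, i.e.\ to the exact identity $[\BF^1, \BV^0] = 0$, and then recover general $i$ by the Jacobi induction already used in the proof of Theorem \ref{t2}. Indeed, applied to $[\BX, [\BF^1, \BV^i]]$ with $[\BX, \BF^1] = 2\BF^0 + k\BG$ from \eqref{s3} and the exact relations \eqref{s4}--\eqref{s5}, this induction produces $[\BF^1, \BV^{i+1}] = (i+1)(i-k)\BV^i$ with no remainder as soon as the inductive hypothesis is itself an exact identity.

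For the base case, I would use the explicit form of $\BV^0$ from the proof of Theorem \ref{t2}, specialised to canonical equation coordinates. Take $M = J^k(\R, \R)$, $X = X_F$, $V = \partial_{x_k}$; both lift trivially to $E(\XX, \DD)$ and hence are independent of $F_0, F_1, G$. Writing
$$\BV^0 = GV + \alpha\BX + \beta\BG + \gamma_0\BF^0 + \gamma_1\BF^1,$$
the coordinate identity $[\partial_{x_k}, \partial_{x_{k-j}}] = 0$ combined with the triangular expansion $\ad_X^j V = (-1)^j\partial_{x_{k-j}} + \sum_{i > k-j} a_i^{(j)}\partial_{x_i}$ forces each $[V, \ad_X^j V]$ into $\spn\{V, \ad_X V, \ldots, \ad_X^{j-1}V\}$ modulo $X$. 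Passing to the $\BV^i$-basis via the upper-triangular change of basis recorded in the proof of Theorem \ref{t2}, this makes $C_2$, $C_3$ and $C_4$ in the linear system \eqref{u1}--\eqref{u4} vanish identically, so that solving the system yields $\alpha = \beta = \gamma_0 = 0$ and $\gamma_1 = C_1/(2k)$, with $C_1$ depending only on $G/F_0$ and on $F_{x_k x_k}$; in particular $\partial_{F_1}\gamma_1 = 0$.

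With $\BV^0 = GV + \gamma_1\BF^1$ and $\gamma_1$ free of $F_1$, the identity $[\BF^1, \BV^0] = 0$ reduces to a direct computation: $\BF^1 = F_0\partial_{F_1}$ annihilates $GV$ because $G$ and $V$ are $F_1$-independent, and it annihilates $\gamma_1\BF^1$ because $\BF^1(\gamma_1) = 0$ and $[\BF^1, \BF^1] = 0$. The Jacobi induction then produces \eqref{s6b} in full.

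The main obstacle lies in cleanly confirming $C_2 = C_3 = C_4 = 0$: one must track the $F_1$-polynomial corrections in the expansion of $\BV^i$ in the $\ad_X^i V$-basis and verify that they do not conspire with the nontrivial lower-index contributions of $[V, \ad_X^j V]$ to produce spurious $\BV^3$- or $\BV^2$-coefficients in the expansions of $[\BV^0, \BV^1]$ and $[\BV^0, \BV^3]$. Once this bookkeeping is settled, the remainder of the argument is routine.
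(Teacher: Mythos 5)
Your reduction to the base case $[\BF^1,\BV^0]=0$ via the Jacobi induction on $[\BX,[\BF^1,\BV^i]]$ is sound and is exactly how the paper propagates the relation once exactness at $i=0$ is known. The gap is in the base case. You take $X=X_F$ and $V=\partial_{x_k}$ and treat them as the projective and normal fields entering the formula $\BV^0=GV+\alpha\BX+\beta\BG+\gamma_0\BF^0+\gamma_1\BF^1$. But for a general right-hand side $F$ the pair $(X_F,\partial_{x_k})$ does \emph{not} satisfy the normalisation \eqref{eq1}: achieving it requires passing to $X=fX_F$ and $V=g\partial_{x_k}$ with generically non-constant $f,g$ (this is the content of Proposition \ref{p1}). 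Once $f$ and $g$ are in play, your key vanishings fail: for instance the $\ad_XV$-component of $[V,\ad_XV]$ equals $g\,\partial_{x_k}\!\log(fg)$ up to sign, which is generically non-zero, so $C_2\neq 0$ and hence $\beta,\gamma_0$ do not vanish; the clean form $\BV^0=GV+\gamma_1\BF^1$ with $\partial_{F_1}\gamma_1=0$ is then lost, and $[\BF^1,\BV^0]=0$ no longer follows from the two-line computation you give. (What \emph{does} survive the change of frame is only $[V,\ad_X^jV]\in\DD^{(j+1)}$, which kills $C_3$ and $C_4$ but not $C_2$.) You flag the identity $C_2=C_3=C_4=0$ as the ``main obstacle'' to be checked; it is not bookkeeping --- it is false in the generality needed.

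For comparison, the paper avoids coordinates entirely at this stage. The equation-type (Goursat) hypothesis gives $[\BV^0,\BV^1]\in\widetilde\DD^{(3)}$ and, after two brackets with $\BX$, $[\BV^0,\BV^3]\in\widetilde\DD^{(4)}$, so the normalisation conditions \eqref{eq5}--\eqref{eq6} upgrade to \eqref{eq9}--\eqref{eq10}, whose right-hand sides are stable under $\ad_{\BF^1}$ (because $\BF^1$ lowers the index of $\BV^j$). One then repeats verbatim the uniqueness argument used for $\BG$ and $\BF^0$: the frame built from $\BV^0$ corrected by the remainder of $[\BF^1,\BV^0]$ would again be adapted and satisfy \eqref{eq4}, so the remainder vanishes. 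If you want to keep your coordinate strategy, you would have to either restrict to equations already normalised so that $(X_F,\partial_{x_k})$ satisfies \eqref{eq1} and argue that this loses no generality under contact equivalence, or redo the computation with general $f,g$ --- in which case the invariance argument of the paper is by far the shorter path.
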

\begin{proof}
If $(\XX,\DD)$ is of equation type then, since $[\BV^0,\BV^1]\in\widetilde{D}^{(3)}$ we get that condition \eqref{eq5} is equivalent to
\begin{equation}\label{eq9}
[\BV^0,\BV^1]=0\mod\BX,\BG,\BF.
\end{equation}
Applying twice the Lie bracket with $\BX$ to both sides of the above equation we get
$$
[\BV^0,\BV^2]=0\mod\BX,\BG,\BF
$$
and 
$$
[\BV^0,\BV^3]+[\BV^1,\BV^2]=0\mod\BX,\BG,\BF.
$$
The last identity implies that $[\BV^0,\BV^3]=-[\BV^1,\BV^2]\mod\BX,\BG,\BF$. But $[\BV^1,\BV^2]\in\widetilde{D}^{(4)}$ and thus $[\BV^0,\BV^3]\in\widetilde{D}^{(4)}$. Therefore, condition \eqref{eq6} takes the form
\begin{eqnarray}\label{eq10}
[\BV^0,\BV^3]=0\mod\BV^0,\BV^1,\BV^2,\BX,\BG,\BF.
\end{eqnarray}
Now, note that the right hand sides of both \eqref{eq9} and \eqref{eq10} are invariant with respect to taking Lie bracket with $\BF^1$. Therefore we can apply the same reasoning as in the proof of \eqref{s4} and \eqref{s5} in Section \ref{sec4} and in this way we get \eqref{s6b}.
\end{proof}

Let us denote
$$
\BH=2\BF^0+k\BG,\qquad \BY=\BF^1,
$$
and
$$
\BW^i=\frac{1}{i!}\BV^i
$$
Then we have easily compute
\begin{equation}
\begin{split}
&[\BX,\BY]=\BH,\qquad [\BH,\BX]=-2\BX,\qquad [\BH,\BY]=2\BY,\\
&[\BG,\BX]=0,\qquad [\BG,\BY]=0,\qquad [\BG,\BH]=0,\\
\end{split}\nonumber
\end{equation}
and, if $(\XX,\DD)$ is of equation type, then
\begin{equation}
\begin{split}
&[\BX,\BW^i]=(i+1)\BW^{i+1},\qquad [\BY,\BW^i]=-(k-i+1)\BW^{i-1},\\
&[\BH,\BW^i]=-2i\BW^i,\qquad [\BG,\BW^i]=\BW^i.
\end{split}\nonumber
\end{equation}
It follows from above that the coframe on $E(\XX,\DD)$ dual to $(\BG,\BH,\BX,\BY,\BW^0,\ldots,\BW^k)$ is a Cartan connection of type $(T(2),G)$ with $G$ being a semidirect product of $Gl(2)$ and $\R^{k+1}$ where $Gl(2)$ acts irreducibly on $\R^{k+1}$.

\end{document}